\title{A family of quintic Thue equations via Skolem's $p$-adic method}
\author{Davide Lombardo\footnote{Dipartimento di Matematica, Università di Pisa. Email address: \texttt{davide.lombardo@unipi.it}}}
\date{}
\newtheorem{theorem}{Theorem}
\numberwithin{theorem}{section}
\newtheorem{lemma}[theorem]{Lemma}
\theoremstyle{definition}
\newtheorem{remark}[theorem]{Remark}
\begin{document}

\maketitle

\begin{abstract}
We solve the diophantine equation $m^5+(4 \cdot 5^4 b^4)mn^4 - n^5=1$ for all integers $b \neq 0$. This gives an example of a family of quintic Thue equations that can be solved completely by using nothing more than Skolem's $p$-adic method. We also give a general introduction to Skolem's method from a modern perspective.

\end{abstract}

\textit{To Roberto Dvornicich, with friendship and gratitude.}
\\
\medskip

\noindent\textbf{Keywords.} Skolem's method, $p$-adic methods, Thue equations, Diophantine equations

\smallskip

\noindent\textbf{MSC Classification.} 11D59, 11D88, 11D41

\section{Introduction}

There is by now a long tradition of solving parametrised families of Thue equations: starting with \cite{MR1042497}, many such equations have been studied, and the topic still attracts attention to this day (see for example \cite{10.2307/2938717, MR1316596, MR1430002, MR1837094, MR2465263}). The methods vary, but are usually rooted in the theory of linear forms in logarithms or other diophantine approximation techniques. Very often, these powerful theoretical tools must be complemented by extensive calculations, for instance in order to reduce the large bounds coming from linear forms in logarithms to manageable size (by way of example, \cite{MR1316596} relies on distributed computations on some 40 workstations!).

On the other hand, specific Thue equations have often been handled using variants of Skolem's $p$-adic method \cite{MR864261, MR945593}, which (whenever applicable) gives simple algebraic solutions, without the burden of substantial additional calculations.
However, due to some intrinsic limitations, Skolem's approach has rarely been applied to whole families of equations. Notable exceptions are some results on equations of low degree that we now recall. In the cubic case, Delone and Nagell obtained sharp upper bounds on the number of solutions of cubic equations with negative discriminant, as summarised by the next two theorems:
\begin{theorem}[Delone \cite{MR0160744}]\label{thm:Delone} Let $d$ be a cube-free integer. The equation
\begin{equation}\label{eq:Delone}
x^3-dy^3 = 1
\end{equation}
has at most 2 integral solutions.
\end{theorem}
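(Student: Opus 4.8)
The plan is to convert the equation into a statement about units in a cubic order and then count solutions by Skolem's $p$-adic method. Setting $\theta=\sqrt[3]{d}$ and $K=\mathbb{Q}(\theta)$, one has $x^3-dy^3=N_{K/\mathbb{Q}}(x-y\theta)$, so a solution of \eqref{eq:Delone} is exactly a norm-one element of the order $\mathbb{Z}[\theta]$ lying in the sublattice $\mathbb{Z}\oplus\mathbb{Z}\theta$, that is, one whose $\theta^2$-coordinate vanishes. Since $\operatorname{disc}(T^3-d)=-27d^2<0$, the field $K$ has signature $(1,1)$, so by Dirichlet's theorem its unit group has rank $1$; as $K$ admits a real embedding its only roots of unity are $\pm1$, and $N_{K/\mathbb{Q}}(-1)=-1$, whence the group $U_1$ of norm-one units is infinite cyclic, say $U_1=\langle\eta\rangle$. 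Every solution is therefore of the form $x-y\theta=\eta^{k}$ for a unique $k\in\mathbb{Z}$. Assuming $d\neq\pm1$ (the two remaining cases being immediate), $T^3-d$ is irreducible and $K$ is a genuine cubic field; since $\eta$ has infinite order it differs from $\pm1$, so $\eta$ generates $K$ and has three distinct conjugates $\eta,\eta',\eta''$ (one real and a complex-conjugate pair). The theorem thus reduces to bounding by $2$ the number of integers $k$ for which the $\theta^2$-coordinate of $\eta^{k}$ vanishes.

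Writing $\eta^{k}=a_k+b_k\theta+c_k\theta^2$ and applying the three embeddings, Cramer's rule on the resulting Vandermonde system expresses the coordinate of interest as $c_k=A\eta^{k}+A'\eta'^{k}+A''\eta''^{k}$ for fixed constants with $A''=\overline{A'}$; this is a nondegenerate ternary linear recurrence, and $k=0$ (the trivial solution $(x,y)=(1,0)$) already gives a zero. First I would record the purely archimedean information: since $\eta$ is real and $|\eta'|=|\eta''|=\eta^{-1/2}$, the term $A\eta^{k}$ dominates as $k\to+\infty$ while the conjugate terms dominate as $k\to-\infty$, so the zeros of $c_k$ are confined to a bounded range. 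This guarantees finiteness but not the sharp count.

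To obtain the bound $2$ I would run Skolem's method. After choosing a prime $p$ adapted to the splitting of $T^3-d$ and to the valuations of $\eta,\eta',\eta''$, I fix a residue $k_0$ modulo a suitable integer $m$ so that each conjugate of $\eta^{m}$ lies in the domain of convergence of the $p$-adic logarithm; writing $k=k_0+mt$ and expanding $\eta^{k}=\eta^{k_0}\exp\!\big(t\log\eta^{m}\big)$ in each embedding, the coordinate $c_{k_0+mt}$ becomes a convergent power series $f_{k_0}(t)\in\mathbb{Q}_p[[t]]$. Strassmann's theorem then bounds the number of $t\in\mathbb{Z}_p$ with $f_{k_0}(t)=0$ by the index of the dominant coefficient of $f_{k_0}$; arranging that each $f_{k_0}$ is of the form constant $+$ (unit)$\,t\,+\,O(t^2)$, with the linear coefficient of minimal valuation, forces at most one zero per residue class, and summing over the finitely many classes that can contain a solution yields at most $2$ in total.

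The main obstacle is uniformity in $d$: a single lucky prime cannot control all cube-free $d$ at once, so the choice of $p$ must exploit a feature present for every $d$ — most naturally the ramification of $3$ stemming from $\operatorname{disc}=-27d^2$ — and one must verify, once and for all, that the leading coefficients of $f_{k_0}$ have the required valuations independently of $d$. Carrying this out amounts to a finite computation in the completion at the chosen prime, and it is precisely this verification, rather than any individual step, that forms the heart of the argument.
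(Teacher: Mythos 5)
The paper does not actually prove Theorem~\ref{thm:Delone}; it is quoted from Delone with a citation, so there is no internal proof to compare against, and your attempt has to be judged on its own. The set-up is correct and is the standard one: reducing to the norm equation, observing that the norm-one units of $\mathbb{Z}[\theta]$ form an infinite cyclic group $\langle\eta\rangle$, and identifying solutions with the exponents $k$ for which the $\theta^2$-coordinate $c_k$ of $\eta^k$ vanishes. The archimedean remark correctly gives finiteness but, as you say, not the count.

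The genuine gap is that the entire difficulty of the theorem is concentrated in the step you defer. You assert that one can ``arrange that each $f_{k_0}$ is of the form constant $+$ (unit)$\,t + O(t^2)$, with the linear coefficient of minimal valuation'' and that ``summing over the finitely many classes that can contain a solution yields at most $2$,'' but neither claim is verified, and neither is automatic. Concretely: with $p=3$ the exponent $m$ can be taken to divide $6$ uniformly (since $3$ is totally ramified and $(\mathbb{Z}[\theta]/3)^\times$ has exponent $6$), so one faces up to six residue classes; a Strassmann count then bounds the number of solutions by the sum over surviving classes of the Strassmann indices, and nothing in your argument shows that only two classes survive, nor that the linear coefficient of $f_{k_0}(t)$ --- essentially the $\theta^2$-coordinate of $\eta^{k_0}\log(\eta^m)$, whose $3$-adic valuation depends on $d$ and on the unknown fundamental unit $\eta$ --- dominates all the others. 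This is exactly where the ``lifting the exponent''-type estimates (cf.\ Section~\ref{sect:dimY1}) or, in the classical proofs of Delone and Nagell, the structural fact that a binomial unit $x-y\theta$ with $xy\neq 0$ must be (up to inversion) the fundamental unit of $\mathbb{Z}[\theta]$, must be supplied. A generic Skolem/Strassmann count does not by itself produce the sharp constant $2$; without the deferred verification the argument proves only that the number of solutions is finite for each $d$, which is much weaker than the statement.
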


\begin{theorem}[Delone \cite{MR1545095}, Nagell \cite{MR1544935}]\label{thm:DeloneFaddeev}
If $F$ is an irreducible binary cubic form with integer coefficients and negative discriminant, then the number $N_F$ of integer solutions to the equation $F(m,n)=1$ is at most $5$. Moreover, if $N_F = 5$, then $F$ is equivalent to
$x^3 - xy^2 + y^3$,
with discriminant $-23$, and, if $N_F = 4$, then $F$ is equivalent to either
$x^3 + xy^2 + y^3$ or $x^3 - x^2y + xy^2 + y^3$,
with discriminant $-31$ or $-44$, respectively.
\end{theorem}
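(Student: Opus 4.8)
The plan is to convert the Diophantine problem into a statement about units in a cubic order and then control the relevant exponents by Skolem's $p$-adic method, exactly the philosophy illustrated elsewhere in the paper. Since $\operatorname{disc}(F) < 0$, the polynomial $F(x,1)$ has one real root $\theta$ and a pair of complex conjugate roots, so $K = \mathbb{Q}(\theta)$ is a cubic field of signature $(1,1)$; by Dirichlet's unit theorem its unit rank is $1 + 1 - 1 = 1$, and there is a fundamental unit $\varepsilon$ with $\mathcal{O}_K^\times = \langle -1, \varepsilon \rangle$. First I would record the standard dictionary: writing $\omega = a_0\theta$ (an algebraic integer, where $a_0$ is the leading coefficient of $F$), a solution $(m,n)$ of $F(m,n)=1$ produces the algebraic integer $\nu = a_0 m - n\omega$ of norm $N_{K/\mathbb{Q}}(\nu) = a_0^2$ lying in the rank-$2$ lattice $L = \mathbb{Z}a_0 \oplus \mathbb{Z}\omega \subset \mathcal{O}_K$. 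Two solutions differ by an element of norm $1$, so, grouping the finitely many principal ideals of norm $a_0^2$ by ideal class, the solutions fall into finitely many orbits of the form $\nu = \pm\nu_0\varepsilon^k$; in the monic case $a_0 = 1$ relevant to the extremal forms there is a single such orbit.

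Within one orbit the problem becomes: for how many $k \in \mathbb{Z}$ does $\pm\nu_0\varepsilon^k$ lie in $L$? Expanding $\nu_0\varepsilon^k = A_k + B_k\omega + C_k\omega^2$ in a suitable integral basis, membership in the codimension-one lattice $L$ amounts to the vanishing of the transverse coordinate $C_k$ (together with a harmless congruence on $A_k$), so I must bound the number of integers $k$ with $C_k = 0$. This is where Skolem's method enters. After choosing a small prime $p$ and replacing $\varepsilon$ by a power congruent to $1$ modulo every prime of $K$ above $p$, I can expand $\varepsilon^k = \exp(k\log\varepsilon)$ $p$-adically; Lagrange interpolation through the three conjugates $\theta_1, \theta_2, \theta_3$ then presents $C_k$ as a convergent $p$-adic power series $C(k) = \sum_{j \ge 0} c_j k^j$. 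Strassmann's theorem bounds its zeros in $\mathbb{Z}_p$ by the largest index at which $|c_j|_p$ is maximal, and an explicit computation of the first few $c_j$ (noting that $k=0$ is always a root) gives a small bound in each residue class modulo the period of $\varepsilon$.

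The genuinely hard part is extracting the sharp constant $5$ together with the exact list of extremal forms, rather than merely a finite bound: a naive application of Strassmann will in general overshoot, so I would combine the $p$-adic count with an archimedean gap principle. The absolute values $|\nu_0\varepsilon^k|$ grow geometrically in $k$, which forces consecutive large solutions to be sparse, while the $p$-adic estimate controls the small ones. The two estimates pull against each other through the regulator of $K$: a small fundamental unit permits many exponents $k$ with $C_k = 0$, but only finitely many cubic fields have small regulator, and these are precisely the ones of small discriminant. I would therefore show that once the regulator exceeds an explicit threshold the combined bound falls below $5$, reducing the extremal analysis to finitely many fields of small discriminant (the smallest being $\operatorname{disc} = -23$, the complex cubic field of notably small regulator). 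For each of those I would carry out the $p$-adic computation explicitly and enumerate the solutions directly, thereby verifying that $N_F = 5$ occurs only for the class of $x^3 - xy^2 + y^3$ and $N_F = 4$ only for the classes of discriminant $-31$ and $-44$. Matching equivalence classes and checking that no other class attains these values is the final, bookkeeping-heavy step.
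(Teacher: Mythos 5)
The paper does not prove this statement: Theorem \ref{thm:DeloneFaddeev} is quoted as a classical result of Delone and Nagell, with references, and serves only as background for the quintic family treated later. So there is no internal proof to compare against; I can only judge your plan on its own terms.

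Your first two paragraphs are a correct and standard setup, and they match the philosophy the paper describes in its introduction: signature $(1,1)$ gives unit rank one, solutions of $F(m,n)=1$ become elements $\pm\nu_0\varepsilon^k$ of a rank-two sublattice, and the vanishing of the transverse coordinate $C_k$ is a $p$-adic analytic condition to which Strassmann's theorem applies, one residue class of $k$ at a time. This yields finiteness and, for any \emph{specific} form, an explicit bound. The genuine gap is in your third paragraph, which is where the theorem actually lives. Strassmann counts zeros separately in each of the $e$ residue classes modulo the order $e$ of $\varepsilon$ in $(\mathcal{O}_K/p)^\times$, so the raw bound is a sum over $e$ classes and has no reason to be $5$; you acknowledge this but replace the fix with an unproven programme. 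The assertion that ``once the regulator exceeds an explicit threshold the combined bound falls below $5$'' is not derived: you give no gap principle quantitatively strong enough to interact with the $p$-adic count, no effective lower bound for the regulator in terms of the discriminant, and no actual enumeration of the residual fields --- and there are infinitely many complex cubic fields of any fixed signature, so ``finitely many fields of small regulator'' needs a precise, proved statement before the case analysis can even begin. The classical Delone--Nagell proof is organised differently: it studies the \emph{binomial units} $\pm\varepsilon^k = x+y\theta$ directly, and exploits algebraic identities relating two or three such units (e.g.\ constraints on the exponent of a quotient of binomial units) to show that at most five exponents $k$ can occur, with the exceptional forms of discriminant $-23$, $-31$, $-44$ emerging from the degenerate cases of those identities rather than from a sweep over fields of small discriminant. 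As written, your argument proves finiteness but not the bound $5$, and does not reach the classification of the extremal forms.
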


The theory in degree 4 is significantly less complete, but Ljunggren established an analogue of Theorem \ref{thm:Delone}:
\begin{theorem}[Ljunggren \cite{zbMATH02520238}]\label{thm:Ljunggren}
Let $d$ be an integer. The equation $x^4-dy^4=1$ admits at most one solution in positive integers.
\end{theorem}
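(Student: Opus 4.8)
The plan is to recast the statement as a unit equation in the quartic field $K=\mathbb{Q}(\sqrt[4]{d})$ and then attack it by Skolem's $p$-adic method. First I would clear away the degenerate values of $d$. If $d\le 0$ then $x^4-dy^4=x^4+|d|y^4\ge 2$ for positive integers $x,y$, so there is no solution; and if $d=c^2$ is a perfect square, the factorisation $x^4-dy^4=(x^2-cy^2)(x^2+cy^2)=1$ forces $cy^2=0$, again leaving no positive solution. We may thus assume $d>0$ is not a square, so that $t^4-d$ is irreducible, $[K:\mathbb{Q}]=4$, and $\theta:=\sqrt[4]{d}\notin\mathbb{Q}(\sqrt d)$. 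A positive solution $(x,y)$ then produces the algebraic integer $\beta=x-\theta y$, whose norm $N_{K/\mathbb{Q}}(\beta)=\prod_{k=0}^{3}\bigl(x-i^{k}\theta\,y\bigr)=x^4-dy^4=1$ makes it a unit of $\mathcal{O}_K$; moreover $\beta$ lies in $\mathbb{Z}+\mathbb{Z}\theta$, i.e. its coordinates on $\theta^{2}$ and $\theta^{3}$ in the basis $1,\theta,\theta^{2},\theta^{3}$ vanish, and conversely a unit of this shape recovers a solution. The point of passing to $K$ rather than to $\mathbb{Q}(\sqrt d)$ is that the two perfect squares $x^2,y^2$ are then encoded for free: the automorphism $\sigma\in\mathrm{Gal}\bigl(K/\mathbb{Q}(\sqrt d)\bigr)$ with $\sigma(\theta)=-\theta$ gives $\beta\,\sigma(\beta)=x^2-y^2\sqrt d$, a Pell unit of $\mathbb{Q}(\sqrt d)$ whose coordinates are the squares we seek.

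Since $K$ has signature $(2,1)$, its unit group has rank $2$ and torsion $\{\pm 1\}$, so I may write $\beta=\pm\eta_1^{a}\eta_2^{b}$ for fixed fundamental units $\eta_1,\eta_2$ and integers $a,b$; the vanishing of the $\theta^{2}$- and $\theta^{3}$-coordinates of $\beta$ then becomes a system of two conditions $f(a,b)=g(a,b)=0$. To analyse it I would fix a rational prime $p$ inert in $K$, replace $\eta_1,\eta_2$ by their $N$-th powers with $N=\#(\mathcal{O}_K/\mathfrak p)^{\times}$ so that each lies in $1+\mathfrak p$ and the $p$-adic logarithm converges, and expand $f$ and $g$ as convergent power series in $\mathbb{Z}_p[[a,b]]$ on each of the finitely many residue classes of $(a,b)$ modulo $N$ (which also absorbs the sign $\pm$). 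In a given class I would use one of the two equations, say $f=0$, to solve $p$-adically for one exponent as an analytic function of the other through the $p$-adic implicit function theorem, and substitute into $g=0$ to obtain a single-variable power series whose zeros are controlled by Strassmann's theorem. Here the subfield $\mathbb{Q}(\sqrt d)$ is a useful guide: the fundamental Pell unit $\varepsilon$ and the involution $\sigma$ let me choose the $\eta_i$ compatibly with the identity $\beta\,\sigma(\beta)=\varepsilon^{n}$, which keeps the bookkeeping of the power-series coefficients manageable.

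I expect the main obstacle to be extracting the sharp constant $1$ — as opposed to merely finitely many solutions — and doing so uniformly in $d$. This requires two things. First, the prime $p$ must be chosen so that, in every surviving residue class, the residual one-variable series has a $p$-adic unit as its linear coefficient while its higher coefficients are not units, so that Strassmann's theorem yields at most one zero there; verifying this amounts to a congruence computation ruling out any degenerate class, and carrying it out uniformly over all admissible $d$ is the delicate point. Second, one must exclude the simultaneous survival of two distinct classes, for which I would invoke the involution $\sigma$ (equivalently $y\mapsto-y$) together with the sign symmetries $x\mapsto-x$ of the equation to identify solutions in pairs, with the trivial unit $\beta=1$ (giving $y=0$, the $\sigma$-fixed base point) as the anchor. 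The Strassmann bound in the single active class, combined with the symmetry argument eliminating the remaining ones, should then yield at most one positive solution $(x,y)$, as claimed.
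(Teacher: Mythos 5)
The paper does not actually prove this statement: Theorem \ref{thm:Ljunggren} is quoted from Ljunggren's 1942 paper as background, so your proposal has to be judged on its own terms. The framework you set up --- pass to $K=\mathbb{Q}(\sqrt[4]{d})$ of unit rank $2$, write $x-\theta y=\pm\eta_1^{a}\eta_2^{b}$, impose the vanishing of the $\theta^2$- and $\theta^3$-coordinates, and run the $p$-adic logarithm/Strassmann machinery residue class by residue class --- is the right general shape and close in spirit to Ljunggren's actual argument. But as written it is a plan, not a proof: the two steps that carry the entire content of the theorem are precisely the ones you defer. First, to get the bound $1$ you need, \emph{uniformly in $d$}, a prime $p$ and a congruence analysis guaranteeing that in each surviving class the residual one-variable series has a unit linear coefficient and non-unit higher coefficients; you flag this as ``the delicate point'' and give no argument, and there is no a priori reason that a single recipe (``an inert prime'') works for all $d$, since the series coefficients depend on the fundamental units $\eta_1,\eta_2$, over which one has essentially no uniform control. (Compare the paper's own Theorem \ref{thm:Main}, where the whole point is that explicit fundamental units are available from the literature, the prime $p=5$ is tied to the parameter by the hypothesis $5\mid b$, and even then $25$ residue classes must be cut down to $2$ by a bespoke computation.) Second, your mechanism for eliminating all but one class does not work as stated: the involution $\sigma$ corresponds to $y\mapsto -y$, and the sign symmetries send a positive solution to solutions with negative coordinates; they do not pair up two distinct \emph{positive} solutions, which could a priori land in unrelated classes of $(a,b)$ modulo $N$. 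Without these two steps Skolem's method yields only ``at most finitely many solutions, bounded by the number of classes times the Strassmann bound'', which is far weaker than the claimed bound of one.

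Two smaller points. For $d=0$ your inequality $x^4+|d|y^4\ge 2$ fails and $(1,y)$ is a solution for every positive $y$, so the reduction to $d>0$ non-square silently needs $d\ne 0$ (a defect of the theorem's loose statement, but your argument should not assert a false inequality). And if you insist on working with the coordinates of $\beta$ in the basis $1,\theta,\theta^2,\theta^3$, you should also require $p$ not to divide the index $[\mathcal{O}_K:\mathbb{Z}[\theta]]$, so that this is genuinely a $\mathbb{Z}_p$-basis of $\mathcal{O}_K\otimes\mathbb{Z}_p$.
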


\begin{remark}
By completely different methods, Bennett and de Weger have shown \cite{MR1434936, MR1837094} that the equation $|ax^n-by^n|=1$, where $a,b$ are fixed integers with $ab \neq 0$ and $n \geq 3$, has at most one solution in positive integers $(x,y)$.
\end{remark}

Beyond the theorems of Delone, Nagell and Ljunggren quoted above, the literature seems to contain few results on parametrised Thue equations obtained by Skolem's method. A noteworthy example is \cite{MR946605}, where certain quartic equations possessing only trivial solutions are considered.
In this note we fully solve a family of quintic Thue equations (having also a nontrivial solution) by using nothing more than Skolem's approach. %
Specifically, we will show the following result, which is perhaps the first instance of a parametric Thue equation of degree 5 solved by this method: %

\begin{theorem}\label{thm:Main}
Let $b$ be a non-zero integer divisible by $5$. The Thue equation
\begin{equation}\label{eq:Thue}
m^5+4b^4mn^4-n^5 = 1
\end{equation}
has precisely three integral solutions, namely $(m,n) = (1, 0), (0,-1)$ and $(1,4b^4)$.
\end{theorem}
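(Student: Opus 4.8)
The plan is to rewrite the equation as a unit equation and attack it by Skolem's method at the prime $p=5$. Since the form is homogeneous of degree $5$, we have $m^5+4b^4mn^4-n^5 = N_{K/\mathbb{Q}}(m-n\theta)$, where $\theta$ is a root of $f(t)=t^5+4b^4t-1$ and $K=\mathbb{Q}(\theta)$. First I would check that $f$ is irreducible over $\mathbb{Q}$: it has no rational root (such a root would divide the constant term, but $f(\pm1)\neq0$), and the $5$-adic factorisation found below forces irreducibility. Since $f'(t)=5t^4+4b^4>0$, the polynomial $f$ has a unique real root, so $K$ has signature $(1,2)$, unit rank $2$, and torsion $\{\pm1\}$. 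The equation becomes $N_{K/\mathbb{Q}}(m-n\theta)=1$, i.e.\ $m-n\theta$ is a unit of norm $1$. The three claimed solutions correspond precisely to $m-n\theta\in\{1,\theta,\theta^5\}$: indeed the defining relation gives $\theta^5=1-4b^4\theta$, so $(m,n)=(1,4b^4)$ yields $m-n\theta=\theta^5$, while $(1,0)$ and $(0,-1)$ yield $1$ and $\theta$. Everything thus reduces to proving that the only norm-$1$ units of the shape $m-n\theta$ with $m,n\in\mathbb{Z}$ are $\theta^0,\theta^1,\theta^5$.

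The arithmetic heart of the matter is the hypothesis $5\mid b$, which gives $f(t)\equiv t^5-1\equiv(t-1)^5\pmod 5$. Setting $\theta=1+u$ and expanding $f(1+u)=u^5+5u^4+10u^3+10u^2+(5+4b^4)u+4b^4$, the Newton polygon at $5$ has two segments: one of length $1$ and integer slope $-(4v_5(b)-1)$, and one of length $4$ and slope $-\tfrac14$. Hence $f$ factors over $\mathbb{Q}_5$ as a linear factor times an irreducible totally ramified quartic, so $5\mathcal{O}_K=\mathfrak{p}_1\mathfrak{p}_2^4$, where $\mathfrak{p}_1$ has residue degree and ramification index $1$. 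Thus $K_{\mathfrak{p}_1}=\mathbb{Q}_5$, and under this embedding $\theta\equiv1\pmod{5^3}$; the remaining four conjugates lie in the totally ramified quartic extension attached to $\mathfrak{p}_2$. Crucially, this local picture is uniform in $b$: the quartic segment is independent of $v_5(b)$, and $\theta$ is always a $5$-adic principal unit of level at least $3$ at $\mathfrak{p}_1$, so its $5$-adic logarithm converges well.

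Next I would determine the unit group, exhibiting a fundamental system $\{\theta,\eta\}$ with $\theta$ one of the two fundamental units and $\eta$ a second explicit unit (verifying that these generate $\mathcal{O}_K^*$, or at least a subgroup of controlled index containing every solution-unit). Writing a solution as $m-n\theta=\pm\theta^x\eta^y$, the three known solutions correspond to $(x,y)=(0,0),(1,0),(5,0)$. To constrain $(x,y)$ I would use the linear dependence of the conjugates of a linear form: for any three embeddings $i,j,k$, with $\alpha=m-n\theta$,
\[
(\theta^{(j)}-\theta^{(k)})\alpha^{(i)}+(\theta^{(k)}-\theta^{(i)})\alpha^{(j)}+(\theta^{(i)}-\theta^{(j)})\alpha^{(k)}=0.
\]
Dividing through by one term turns this into a relation among ratios $\alpha^{(i)}/\alpha^{(k)}=\pm(\theta^{(i)}/\theta^{(k)})^x(\eta^{(i)}/\eta^{(k)})^y$; embedding $K$ into $\overline{\mathbb{Q}_5}$ through $\mathfrak{p}_1$ and $\mathfrak{p}_2$ and taking $5$-adic logarithms linearises the exponents and produces a system of $5$-adic power-series equations $P(x,y)=Q(x,y)=0$.

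Finally I would invoke Strassmann's theorem. The idea is to use the high-level congruence $\theta\equiv1\pmod{5^3}$ to control the $5$-adic valuations of the coefficients of $P$ and $Q$, reduce the system to a single power series in one variable (for instance by showing one equation forces $y\equiv0$ in a suitable $5$-adic sense, so that the analysis collapses onto the powers of $\theta$), and then show this series has exactly three zeros, necessarily $x\in\{0,1,5\}$. The main obstacle is twofold: determining the unit group explicitly and uniformly in $b$, and verifying the Strassmann count \emph{uniformly} in $b$ — that is, checking that the relevant leading coefficient is a $5$-adic unit and that the number of zeros is exactly three for \emph{every} nonzero $b$ divisible by $5$, rather than only for isolated values. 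It is this uniform $p$-adic control, rather than any single delicate estimate, that should make the whole parametric family tractable by Skolem's method alone.
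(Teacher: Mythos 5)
Your global set-up matches the paper's: the same norm equation, the same signature computation giving unit rank $2$, the same identification of the three known solutions with $1,\vartheta,\vartheta^5$, and the same choice of $p=5$ exploiting $5\mid b$. Your Newton-polygon analysis (splitting $5\mathcal{O}_K=\mathfrak{p}_1\mathfrak{p}_2^4$ with $\vartheta\equiv 1\pmod{5^3}$ at the unramified place) is correct and even gives a self-contained irreducibility proof, whereas the paper cites Heuberger--Pethő--Tichy type results and works not in completions of $K$ but in the semi-local ring $\mathbb{Z}[\vartheta]\otimes\mathbb{Z}_5\cong\mathbb{Z}_5^5$, imposing the vanishing of the coefficients of $\vartheta^2,\vartheta^3,\vartheta^4$ rather than using Siegel's identity among conjugates. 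That difference of ambient space is a matter of taste; the two real gaps lie elsewhere.

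First, you never exhibit the second fundamental unit $\eta$, and determining a fundamental system \emph{uniformly in $b$} is the hardest global input: the paper takes $\xi_1=\vartheta$, $\xi_2=\vartheta^2+2b\vartheta+2b^2$ from \cite[Satz~3]{MR780251}, and without an explicit $\eta$ none of your power series $P(x,y),Q(x,y)$ can be written down, let alone estimated. Second, the $p$-adic endgame you sketch does not have the shape the problem actually forces. Before any logarithm can be taken one must split according to $(n_1\bmod 5,\,n_2\bmod 5)$; the hypothesis $5\mid b$ is used precisely to cut these $25$ cases down to $2$ (namely $n_2\equiv 0$ and $n_1\equiv 0$ or $1$), and the two surviving cases are \emph{not} handled by a single Strassmann count with $N=3$. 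The case $n_1\equiv 1$ needs the two-variable Jacobian criterion (Theorem \ref{thm:Skolem}) to yield exactly one solution, while the case $n_1\equiv 0$ needs Weierstrass preparation in two variables to eliminate $t_2$, followed by Strassmann with $N=2$; moreover a change of variables $t_2\mapsto t_1+t_2$ is required first, because the coefficients of the monomials involving $t_1$ alone are divisible by high powers of $b=5^k(\cdots)$, which would otherwise make the ``leading coefficient is a unit'' check fail and the bound non-uniform in $b$. You correctly identify this uniformity as the main obstacle, but the proposal stops exactly where the proof begins: the reduction of cases, the choice of which coordinate equations to use, and the valuations of the relevant coefficients ($-4b^4t_1+2b^4t_2$, $-\tfrac{20}{3}b^3t_2+\cdots$, $-8b^8t_1+8b^8t_1^2$) all remain to be computed.
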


Theorem \ref{thm:Main} gives a new application of Skolem's method in families, in a context where the group of units of the relevant number field has rank 2 (the technique is most commonly used when the unit rank is one; Theorems \ref{thm:Delone} and \ref{thm:DeloneFaddeev} in particular fall in this case). It seems possible to remove the assumption $5 \mid b$ with some extra work, see in particular Remark \ref{rmk:5IsIrrelevant}. However, we have decided to keep this hypothesis to simplify the argument, especially given that our main objective is to give a presentation of Skolem's approach in modern language, and to place it in the more general context of a family of $p$-adic methods for the determination of integral and rational points on certain algebraic varieties. While the analogy is often alluded to in the literature, different incarnations of the general idea are not usually discussed from a unifying perspective, which we try to do below.
We will in particular mention the connection with two modern strategies for the determination of rational points on algebraic curves: the Chabauty-Coleman method \cite{MR4484, MR808103} and its so-called \textit{quadratic} extension, in the approach of Edixhoven and Lido \cite{edixhoven2020geometric}.

Consider an algebraic variety $X_0$ over $\mathbb{Z}$, whose integral points $X_0(\mathbb{Z})$ we wish to determine (when $X_0$ is a projective curve, rational and integral points coincide). Even more generally, $X_0$ could be any subset of $\mathbb{Z}^N$ for some $N \geq 1$, not necessarily given by polynomial equations: in the context of Thue equations, the relevant conditions may be expressed by exponential equations, as we will describe below. By abuse of notation, we will denote the set of points of interest by $X_0(\mathbb{Z})$ in this more general setting as well.

The basic idea of a large class of methods is as follows. One fixes an auxiliary prime $p$ and an `ambient space' $A$ (a $p$-adic analytic variety). Within $A$, one identifies:%
\begin{enumerate}
\item a first $p$-adic variety $X$ (possibly singular), which for geometric reasons contains a copy of $X_0(\mathbb{Z})$;
\item a second $p$-adic variety $Y$ (possibly singular), determined by the global arithmetic of $X_0$, which also contains $X_0(\mathbb{Z})$.
\end{enumerate}
The choice of $X$ and $Y$ ensures that the integral points $X_0(\mathbb{Z})$ lie in $X \cap Y$. If $X$ and $Y$ are chosen `independently', and $\dim X + \dim Y \leq \dim A$, it is reasonable to expect the intersection $X \cap Y$ to be finite: if this is the case, we can usually get quite sharp upper bounds for $\#X_0(\mathbb{Z})$. We may also hope to determine the set $X_0(\mathbb{Z})$ itself, although this is usually only possible if all points in the intersection $X \cap Y$ do actually come from $X_0(\mathbb{Z})$, and are not `extra' $p$-adic points whose coordinates are not integral. We now make this more concrete in two important cases: Chabauty's method for rational points on curves and Skolem's method for cubic Thue equations with negative discriminant.

\begin{enumerate}[leftmargin=*] %
\item In Chabauty's method, $X_0$ is a smooth projective curve of genus $g$, and one takes $A=J(\mathbb{Q}_p)$, where $J$ is the Jacobian of $X_0$. Provided that at least one rational point $P_0$ on $X_0$ is known, we may embed $X_0(\mathbb{Q}_p) \hookrightarrow A(\mathbb{Q}_p)$ by the Abel-Jacobi map, based at the known rational point $P_0$. We take $X$ to be the image of this map. The role of $Y$ is played by the $p$-adic closure of the subgroup $J(\mathbb{Q}) \subseteq J(\mathbb{Q}_p)=A$: this is a subvariety of dimension at most $r := \operatorname{rk} J(\mathbb{Q})$. Since (under the Abel-Jacobi map) we have $X_0(\mathbb{Q}) \subseteq J(\mathbb{Q})$, it is then clear by construction that $X_0(\mathbb{Q}) = X_0(\mathbb{Z})$ lies in the intersection $X \cap Y$. The inequality $\dim X + \dim Y \leq \dim A$ is certainly implied by $1+ r \leq g$: this is the famous Chabauty condition, under which Chabauty and Coleman have shown that the intersection $X \cap Y$ is indeed finite \cite{MR4484, MR808103}. The so-called \textit{quadratic} extension of this method is much more sophisticated, but the basic idea is the same. The ambient variety $A$ is given by a $\mathbb{G}_m^{\rho-1}$-torsor over $J$, where $\rho$ is the rank of the $\mathbb{Z}$-module of symmetric endomorphisms of $J$. The variety $X$ is again given by the $\mathbb{Q}_p$-points of a copy of the original curve $X_0$, while $Y$ is essentially a finite-degree cover of the $p$-adic closure of $J(\mathbb{Q})$. The dimension condition becomes $1 + r \leq g+\rho-1$: Edixhoven and Lido prove that when this inequality holds the intersection $X \cap Y$, which contains $X_0(\mathbb{Z})$, is finite and can be described by explicit $p$-adic equations.
\item Consider now the Thue equation $F(m,n)=1$, where $F(x,y)$ is a homogeneous polynomial of degree 3 with integer coefficients. We assume for simplicity that $F(x,1)$ is monic, and that $F(x,y)$ is irreducible (if that is not the case, solving the corresponding Thue equation is easier).
We may then write $F(x,y) = (x-y\vartheta_1)(x-y\vartheta_2)(x-y\vartheta_3)$ for suitable algebraic integers $\vartheta_i$ of degree 3. Finally suppose that the discriminant of $F$ is negative, so that the cubic field $K := \mathbb{Q}[x]/(F(x,1))$ has precisely one real embedding. 
In this case, letting $\vartheta$ be the class of $x$ in the quotient $\mathbb{Q}[x]/(F(x,1))$ (that is, a root of $F(x,1)$ in $K$), the Thue equation may be rewritten as the norm equation
\[
N_{K/\mathbb{Q}}(m-n\vartheta) = 1.
\]
Since $m,n$ are required to be integers, $m-n\vartheta$ lies in the ring $\mathbb{Z}[\vartheta]$. By assumption, $K$ has one real embedding and two complex conjugate ones, so by a variant of Dirichlet's unit theorem the unit group $\mathbb{Z}[\vartheta]^\times$ has rank one, and is therefore of the form $\langle -1 \rangle \times \langle \varepsilon \rangle$ for a certain unit $\varepsilon$ with $N_{K/\mathbb{Q}}(\varepsilon)=1$. The elements of $\mathbb{Z}[\vartheta]$ with norm 1 are then precisely the powers of $\varepsilon$, and solving the Thue equation amounts to finding the integers $k$ for which
\begin{equation}\label{eq:ThueRephrased}
\varepsilon^k = a_0 + a_1 \vartheta
\end{equation}
holds for certain integers $a_0, a_1$ (a solution is then given by $m=a_0, n=-a_1$). We may consider Equation \eqref{eq:ThueRephrased} as defining a subset $X_0$ of $\mathbb{Z}^3$, namely the set of points $(a_0, a_1, 0)$ which are also the coefficients (in the basis $1, \vartheta, \vartheta^2$) of an element of the form $\varepsilon^k$ for some integer $k$.

We are now in a position to frame Skolem's method within the general framework described above. We take as ambient space $A$ the $p$-adic variety $\mathbb{Z}[\vartheta] \otimes_{\mathbb{Z}} \mathbb{Z}_p$, which geometrically is the affine space of dimension 3 over $\mathbb{Z}_p$, with natural coordinates given by the coefficients of $1, \vartheta, \vartheta^2$. 
The variety $X$ is the codimension-1 linear subspace where the coefficient of $\vartheta^2$ vanishes, and there is an obvious embedding of $X_0$ in $X$.
The variety $Y$ is the $p$-adic closure of the set $\varepsilon^k$ for $k \in \mathbb{Z}$: the global arithmetic information that goes into this description is the unit $\varepsilon$, which is itself determined by the arithmetic of the number ring $\mathbb{Z}[\vartheta]$. It is again clear by construction that $X_0(\mathbb{Z})$ is contained in the intersection $X \cap Y$.
Finally, one has $\dim X=2$ and -- as we will see -- $\dim Y =1$, so the dimension condition is met.%
\end{enumerate}

For the equation of Theorem \ref{thm:Main}, the situation is slightly different, in the sense that $X$ and $Y$ are both of dimension $2$, and are embedded in an ambient space of dimension 5. We will see that the intersection $X \cap Y$ is finite and always consists of precisely 3 points: since we already know three integral solutions to Equation \eqref{eq:Thue}, this will imply that the solutions listed in the statement of Theorem \ref{thm:Main} are in fact the only ones. 

\medskip

To conclude our general description of Skolem's method we briefly touch upon the main tools typically used to bound the size of $X \cap Y$. We begin with a result of Strassmann that is essentially a form of the Weierstrass preparation theorem in one variable:%
\begin{theorem}[{Strassmann \cite[Theorem 4.5.1]{Cohen}}]\label{thm:Strassmann}
Let $f=\sum_{n \geq 0} a_nx^n$ be a power series with coefficients in  $\mathbb{Q}_p$ and denote by $v_p$ the $p$-adic valuation on $\mathbb{Q}_p$. Suppose that $v_p(a_n) \to \infty$ as $n \to \infty$ and that $f$ is not identically zero, and let
$
r := \min v_p(a_n)$, $N:=\max\{n : v_p(a_n)=r\}$. 
The power series $f(x)$ converges for all $x \in \mathbb{Z}_p$, and the equation $f(x)=0$ has at most $N$ solutions in $\mathbb{Z}_p$.
\end{theorem}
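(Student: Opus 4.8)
The plan is to treat the two assertions separately, establishing convergence first and then the bound on the number of zeros by induction on $N$.

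For convergence I would use that $\mathbb{Q}_p$ is complete for a non-archimedean absolute value, so that a series converges as soon as its general term tends to $0$. If $x \in \mathbb{Z}_p$ then $v_p(x) \geq 0$, whence $v_p(a_n x^n) = v_p(a_n) + n\,v_p(x) \geq v_p(a_n) \to \infty$; thus $a_n x^n \to 0$ and $f(x)$ is well defined for every $x \in \mathbb{Z}_p$. For the count of zeros, the base case is $N = 0$: here $v_p(a_0) = r$ while $v_p(a_n) > r$ for all $n \geq 1$, and since $v_p(a_n) \to \infty$ the quantity $r_1 := \min_{n \geq 1} v_p(a_n)$ is attained and satisfies $r_1 > r$. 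For any $x \in \mathbb{Z}_p$ the tail $\sum_{n \geq 1} a_n x^n$ then has valuation at least $r_1 > r = v_p(a_0)$, so the ultrametric inequality forces $v_p(f(x)) = r < \infty$, hence $f(x) \neq 0$; so $f$ has no zeros, consistent with the bound.

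For the inductive step I would assume $N \geq 1$ and that the statement holds for every admissible power series whose associated parameter equals $N-1$. If $f$ has no zero there is nothing to prove, so let $\alpha \in \mathbb{Z}_p$ be a zero. Writing $f(x) = f(x) - f(\alpha) = \sum_{n \geq 0} a_n(x^n - \alpha^n)$ and factoring $x^n - \alpha^n = (x-\alpha)\sum_{j=0}^{n-1} x^j \alpha^{n-1-j}$, I obtain $f(x) = (x-\alpha)\,g(x)$ with $g(x) = \sum_{m \geq 0} b_m x^m$ and $b_m = \sum_{k \geq 0} a_{m+1+k}\,\alpha^k$. Because $\alpha \in \mathbb{Z}_p$ we have $v_p(\alpha^k) \geq 0$, so each defining series converges and $v_p(b_m) \geq \min_{n > m} v_p(a_n) \to \infty$; thus $g$ again satisfies the hypotheses of the theorem. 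Since $\mathbb{Z}_p$ is an integral domain, every zero of $f$ other than $\alpha$ is a zero of $g$, so it will suffice to show that the parameter attached to $g$ equals $N-1$ and invoke the inductive hypothesis.

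The crux is precisely this bookkeeping of valuations for the coefficients $b_m$. I would verify that $\min_m v_p(b_m) = r$ and that the largest index achieving this minimum is exactly $N-1$: for $m \geq N$ every $a_{m+1+k}$ has index $> N$, so $v_p(b_m) > r$; for $m = N-1$ the term $a_N$ has valuation exactly $r$ while all later terms have valuation $> r$, giving $v_p(b_{N-1}) = r$ by ultrametricity; and for $m < N-1$ one has $v_p(b_m) \geq r$. Hence $g$ has parameter $N-1$, the inductive hypothesis bounds its zeros by $N-1$, and restoring $\alpha$ yields at most $N$ zeros of $f$, completing the induction. The genuinely delicate point is this valuation count, together with the justification that the ultrametric inequality may legitimately be applied to the infinite sums defining $b_m$ and $g(x)$; both rely essentially on the hypothesis $v_p(a_n) \to \infty$ and on $\alpha$ being a $p$-adic \emph{integer}, which is what keeps all the auxiliary series convergent and their valuations under control.
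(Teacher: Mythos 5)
Your proof is correct: the paper states Strassmann's theorem as a cited result (Theorem 4.5.1 of Cohen) and gives no proof of its own, and your argument --- convergence from $v_p(a_nx^n)\to\infty$, then induction on $N$ by factoring out a root $\alpha\in\mathbb{Z}_p$ as $f(x)=(x-\alpha)g(x)$ with $b_m=\sum_{k\ge 0}a_{m+1+k}\alpha^k$ and checking that the Strassmann parameter of $g$ drops to $N-1$ --- is exactly the standard proof found in that reference. The valuation bookkeeping for the $b_m$ and the appeal to the ultrametric inequality for the convergent auxiliary series are both handled correctly, so nothing is missing.
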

In our case of interest we will have to locate the zeroes of a system of two $p$-adic functions in two variables, which will require a slightly non-trivial reduction to the one-variable version of Strassmann's theorem given above. In some situations, the following result is enough to handle the case of several power series in several variables, but our case is complicated enough that it needs finer tools (the Weierstrass preparation theorem for general $p$-adic series \cite{MR1918586, MR607075}).

\begin{theorem}[{Skolem \cite{Skolem34}}]\label{thm:Skolem}
Let $p$ be a prime number. For $j=1,\ldots,n$ let $f_j(t_1,\ldots,t_n) = \sum_{i \geq 0} p^i f_{ij}(t_1,\ldots,t_n)$, where each $f_{ij}$ is a polynomial in $\mathbb{Z}_p[t_1,\ldots,t_n]$. Suppose that the $f_{0j}$ are linear forms and that the determinant of the Jacobian matrix $\left( \frac{\partial f_{0j} }{ \partial t_i} \right)$ is a $p$-adic unit. Then, the system of equations $f_j(t_1,\ldots,t_n)=0$ for $j=1,\ldots,n$ has at most one solution $(t_1,\ldots,t_n)$ in $\mathbb{Z}_p^n$.
\end{theorem}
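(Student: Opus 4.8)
The plan is to establish uniqueness directly: if $a = (a_1,\dots,a_n)$ and $b=(b_1,\dots,b_n)$ are two solutions in $\mathbb{Z}_p^n$, I will show that their difference $c := a - b$ is zero. Before doing so, I would note that each $f_j$ genuinely defines a function on $\mathbb{Z}_p^n$: since every $f_{ij}$ is a polynomial with coefficients in $\mathbb{Z}_p$, any $t \in \mathbb{Z}_p^n$ gives $f_{ij}(t) \in \mathbb{Z}_p$, so that $v_p\big(p^i f_{ij}(t)\big) \geq i \to \infty$ and the defining series converges. Writing the linear forms as $f_{0j}(t) = \sum_{i=1}^n L_{ji} t_i$, the hypothesis on the Jacobian says precisely that the matrix $L = (L_{ji})$ satisfies $\det L \in \mathbb{Z}_p^\times$, i.e.\ $L \in \mathrm{GL}_n(\mathbb{Z}_p)$.

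The one non-formal ingredient is the elementary factorisation of a polynomial difference: for any $g \in \mathbb{Z}_p[t_1,\dots,t_n]$ there exist polynomials $h_1,\dots,h_n$ with coefficients in $\mathbb{Z}_p$ (in the $2n$ variables $a_1,\dots,a_n,b_1,\dots,b_n$) such that
\[ g(a) - g(b) = \sum_{k=1}^n (a_k - b_k)\, h_k(a,b), \]
obtained by changing one coordinate at a time and using that $a_k - b_k$ divides $a_k^m - b_k^m$ for every $m$. Applying this to each $f_{ij}$ and evaluating at $a,b$ yields elements $h_{ijk}(a,b) \in \mathbb{Z}_p$ with $f_{ij}(a) - f_{ij}(b) = \sum_{k} c_k\, h_{ijk}(a,b)$.

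Now I would subtract the relations $f_j(a) = 0$ and $f_j(b) = 0$. The linear part contributes $\sum_i L_{ji} c_i$, while the tail contributes $\sum_{i \geq 1} p^i\big(f_{ij}(a) - f_{ij}(b)\big) = \sum_k c_k \sum_{i \geq 1} p^i h_{ijk}(a,b)$; the interchange of the finite $k$-sum with the convergent $i$-series is legitimate. Setting $M_{jk} := \sum_{i \geq 1} p^{i-1} h_{ijk}(a,b) \in \mathbb{Z}_p$, the resulting identity reads, in matrix form,
\[ 0 = (L + pM)\, c, \qquad M = (M_{jk}) \in \mathrm{M}_n(\mathbb{Z}_p). \]

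To finish, note that $L + pM \equiv L \pmod{p}$, so $\det(L + pM) \equiv \det L \not\equiv 0 \pmod p$; hence $\det(L+pM)$ is a $p$-adic unit and $L + pM \in \mathrm{GL}_n(\mathbb{Z}_p)$. Applying its inverse to $0 = (L+pM)c$ gives $c = 0$, that is $a = b$. I expect the only real point requiring care to be the analytic bookkeeping around the infinite series — checking that each $f_j$ converges on $\mathbb{Z}_p^n$ and that the series defining $M_{jk}$ converges $p$-adically, which is exactly what the structural hypothesis $v_p \to \infty$ guarantees. Once this is in place, the argument reduces to the linear-algebra observation that perturbing an invertible integral matrix by $p$ times an integral matrix leaves it invertible over $\mathbb{Z}_p$; note that this establishes uniqueness but not existence, in accordance with the `at most one' in the statement.
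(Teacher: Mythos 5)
The paper does not actually prove this statement: it is quoted as a known result of Skolem with a citation to the 1934 paper, so there is no in-text proof to compare against. Your argument is correct and complete on its own terms: the polynomial difference identity $g(a)-g(b)=\sum_k (a_k-b_k)h_k(a,b)$ with $h_k\in\mathbb{Z}_p[a,b]$ is valid (telescoping one coordinate at a time), the series defining $M_{jk}$ converges because $v_p\bigl(p^{i-1}h_{ijk}(a,b)\bigr)\geq i-1$, the interchange of the finite $k$-sum with the convergent $i$-series is harmless, and the reduction to the invertibility of $L+pM$ over $\mathbb{Z}_p$ (via $\det(L+pM)\equiv\det L\pmod p$) correctly yields $c=0$. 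This is essentially the classical uniqueness argument, and it proves exactly the ``at most one solution'' claim of the statement.
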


Finally, I would like to point out that proving that $Y$ has the expected dimension (Section \ref{sect:dimY1}) relies on certain $p$-adic estimates that may be regarded as a sophisticated version of the so-called `lifting the exponent' lemma. This is an elementary fact that has often appeared in various Mathematical Olympiads and that I first learned from Roberto Dvornicich. It is a pleasure to be able to use it in a paper written in his honour.

\section{The $p$-adic closure of a set of units}\label{sect:dimY1}
In this section we prove that in the case of cubic Thue equations with negative discriminant the $p$-adic variety $Y$ (with notation as in the introduction) has dimension 1. This fact is well-known, but often not stated in this language, so we prove a more general statement that immediately implies it and that also covers the situation of Theorem \ref{thm:Main} (where $Y$ has dimension 2).
Let $R$ be a $\mathbb{Z}_p$-algebra that is a free $\mathbb{Z}_p$-module of finite rank $r$. Let
$\varepsilon_1, \ldots, \varepsilon_k$ be elements of $R^\times$, and consider the set
\[
Y_0 := \{ \varepsilon_1^{e_1} \cdots \varepsilon_k^{e_k} : e_1,\ldots,e_k \in \mathbb{Z} \}.
\]
Denote by $Y$ the closure of $Y_0$ in the $p$-adic topology of $R$ (since $R \cong \mathbb{Z}_p^r$ as $\mathbb{Z}_p$-modules, there is a natural $p$-adic topology on $R$). We will prove:
\begin{theorem}\label{thm:DimY}
$Y$ is a finite union of $p$-adic manifolds, each of which is the image of $\mathbb{Z}_p^k$ via a $p$-adic analytic map. In particular, the dimension of $Y$ is at most $k$.
\end{theorem}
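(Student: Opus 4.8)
The plan is to reduce the multiplicative structure of $Y_0$ to an additive one by means of the $p$-adic logarithm, after first passing to a finite-index subgroup of $R^\times$ on which $\log$ is well-behaved. Since $R$ is a free $\mathbb{Z}_p$-module of rank $r$, the quotient ring $R/p^m R$ is finite, hence so is its unit group, and consequently the subgroup $U_m := 1 + p^m R$ has finite index in $R^\times$. Fixing $m$ large enough that $\log$ and $\exp$ define mutually inverse homeomorphisms between $(U_m, \times)$ and $(p^m R, +)$ (so $m \geq 1$ for $p$ odd and $m \geq 2$ for $p = 2$), there is an integer $N$ with $\varepsilon_i^N \in U_m$ for every $i$. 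Writing each exponent as $e_i = N q_i + s_i$ with $0 \leq s_i < N$ then exhibits
\[
Y_0 = \bigcup_{(s_1,\ldots,s_k)} \eta_{\mathbf{s}} \cdot \big\{ u_1^{q_1} \cdots u_k^{q_k} : q_1,\ldots,q_k \in \mathbb{Z} \big\},
\]
a \emph{finite} union (at most $N^k$ terms) indexed by the residues $\mathbf{s} = (s_1,\ldots,s_k)$, where $\eta_{\mathbf{s}} = \prod_i \varepsilon_i^{s_i}$ is a fixed unit and $u_i := \varepsilon_i^N \in U_m$.

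First I would treat a single coset. Setting $\ell_i := \log u_i \in p^m R$, the identity $u_1^{q_1}\cdots u_k^{q_k} = \exp\big(\sum_i q_i \ell_i\big)$ shows that this coset (before translation by $\eta_{\mathbf{s}}$) is the image under $\exp$ of the $\mathbb{Z}$-module $\Lambda := \sum_i \mathbb{Z}\,\ell_i \subseteq p^m R$. The $\mathbb{Z}$-linear map $\mathbb{Z}^k \to R$, $(q_i) \mapsto \sum_i q_i \ell_i$, is continuous for the $p$-adic topologies and extends to the $\mathbb{Z}_p$-linear map $\mathbb{Z}_p^k \to R$, $(t_i) \mapsto \sum_i t_i \ell_i$, whose image is the compact, hence closed, $\mathbb{Z}_p$-module $\sum_i \mathbb{Z}_p\,\ell_i$. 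Because $\mathbb{Z}^k$ is dense in $\mathbb{Z}_p^k$, the closure of $\Lambda$ is exactly $\sum_i \mathbb{Z}_p\,\ell_i$. Applying the homeomorphism $\exp$ (and using that $U_m$ is closed in $R$, so closures may be computed inside $U_m$), and then translating by the analytic, $\mathbb{Z}_p$-linear multiplication-by-$\eta_{\mathbf{s}}$ map, the closure of the coset becomes the image of $\mathbb{Z}_p^k$ under the $p$-adic analytic map
\[
(t_1,\ldots,t_k) \longmapsto \eta_{\mathbf{s}} \cdot \exp\Big( \textstyle\sum_{i} t_i \ell_i \Big).
\]

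Finally, since closure commutes with finite unions, $Y$ is the union of these finitely many images, each a $p$-adic manifold parametrised by $\mathbb{Z}_p^k$ and therefore of dimension at most $k$; this yields the stated bound. I expect the main technical point to be the careful justification that $\exp$ and $\log$ are genuine inverse homeomorphisms on the chosen $U_m$ and, relatedly, that the power series defining $\exp\big(\sum_i t_i \ell_i\big)$ converges on all of $\mathbb{Z}_p^k$: both rest on controlling the growth of $v_p\big(u_i^{p^j}-1\big)$ as $j$ grows, which is precisely the $p$-adic `lifting the exponent' estimate alluded to in the introduction, and one must be attentive to the prime $p=2$, where $m=1$ is insufficient and $N$ (hence the number of cosets) must be adjusted. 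Alternatively, one can bypass $\log$ altogether: writing $u_i = 1 + p^m \alpha_i$, the binomial expansion $u_i^{t_i} = \sum_{n} \binom{t_i}{n} p^{mn}\alpha_i^{n}$ converges for all $t_i \in \mathbb{Z}_p$ precisely because $mn - v_p(n!) \to \infty$, which makes the analytic parametrisation explicit and manifestly elementary.
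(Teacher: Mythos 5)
Your proposal is correct and follows essentially the same route as the paper: decompose $Y_0$ into finitely many cosets indexed by exponent residues, pass to a subgroup $1+qR$ where $\log$ and $\exp$ are inverse analytic bijections, and conclude by density of $\mathbb{Z}^k$ in $\mathbb{Z}_p^k$ together with compactness of the images. The only cosmetic difference is that the paper additionally extracts a $\mathbb{Z}_p$-basis from the logarithms $\log(\varepsilon_i^{E_i})$ to exhibit each piece explicitly as an injectively immersed manifold of dimension $m\leq k$, which your reduction to the closed submodule $\sum_i \mathbb{Z}_p\,\ell_i$ accomplishes implicitly.
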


We will need some facts about certain special $p$-adic analytic functions (for a general introduction to the topic we refer the reader to \cite[§4.2]{Cohen}):
\begin{lemma}\label{lemma:pAdicFunctions}
Let $p$ be a prime number and let $
q = \begin{cases}
4, \text{ if }p=2 \\
p, \text{ if } p> 2.
\end{cases}
$
Let $R$ be a $\mathbb{Z}_p$-algebra whose underlying additive group is a free $\mathbb{Z}_p$-module of finite rank $r$. The following hold:
\begin{enumerate}
\item The series
\[
\log(1+qx) := \sum_{n \geq 1} (-1)^{n+1} \frac{(qx)^n}{n}
\]
converges for all $x \in R$ and defines a $p$-adic analytic function $R \to qR$.
\item The series
\[
\exp(qx) := \sum_{n \geq 0} \frac{(qx)^n}{n!}
\]
converges for all $x \in R$ and defines a $p$-adic analytic function $R \to 1+qR$.
\item For all $x \in R$ we have $\exp(\log(1+qx))=1+qx$. For all $x,y \in R$ we have $\exp(qx+qy)=\exp(qx)\exp(qy)$.
\end{enumerate}
\end{lemma}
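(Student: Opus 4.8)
The plan is to reduce everything to elementary valuation estimates after endowing $R$ with a convenient gauge. Fixing a $\mathbb{Z}_p$-basis $e_1,\ldots,e_r$ of $R$, I would set $v(x) := \min_i v_p(x_i)$ for $x = \sum_i x_i e_i$, so that $(R,v)$ is a complete ultrametric $\mathbb{Z}_p$-module with $v(x)\geq 0$ for all $x\in R$ and $v(\lambda x)=v_p(\lambda)+v(x)$ for $\lambda\in\mathbb{Z}_p$. The one non-formal input is that, since $R$ is a $\mathbb{Z}_p$-algebra, the structure constants expressing each $e_ie_j$ in the chosen basis lie in $\mathbb{Z}_p$; expanding a product coordinate by coordinate then yields the submultiplicativity $v(xy)\geq v(x)+v(y)$, and in particular $v(x^n)\geq n\,v(x)$.

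Convergence (parts 1 and 2) follows from term-by-term estimates based on $v(qx)=v_p(q)+v(x)\geq v_p(q)$. The general term of the logarithm satisfies $v\big((qx)^n/n\big)\geq n\,v_p(q)-v_p(n)$, while that of the exponential satisfies $v\big((qx)^n/n!\big)\geq n\,v_p(q)-v_p(n!)$. Since $v_p(n)\leq \log_p n$ and, by Legendre's formula, $v_p(n!)\leq (n-1)/(p-1)$, both lower bounds tend to $+\infty$ as $n\to\infty$; for the exponential the decisive inequality is $v_p(q)>1/(p-1)$, which holds because $v_p(q)=1$ for $p>2$ and $v_p(q)=2$ for $p=2$. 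Completeness of $R$ then gives convergence. Re-examining these bounds to check that each term has valuation at least $v_p(q)$ for every $n\geq 1$ — equivalently $(n-1)v_p(q)\geq v_p(n)$ for the logarithm and $v_p(n!)\leq (n-1)v_p(q)$ for the exponential, both of which follow from $v_p(q)\geq 1$ — shows that the sums land in $qR$ and $1+qR$ respectively. Being globally convergent power series in the coordinates of $x$, both define $p$-adic analytic maps in the sense of \cite[§4.2]{Cohen}.

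For part 3 I would establish the identities formally and then transfer them to $R$ by convergence. The equalities $\exp(\log(1+T))=1+T$ in $\mathbb{Q}[[T]]$ and $\exp(X+Y)=\exp(X)\exp(Y)$ in $\mathbb{Q}[[X,Y]]$ are classical; since part 1 gives $\log(1+qx)\in qR$, the composite $\exp(\log(1+qx))$ is a legitimately convergent specialisation of the first. For the homomorphism property I would expand $\exp(qx)\exp(qy)=\sum_{i,j}(qx)^i(qy)^j/(i!\,j!)$ and regroup by $n=i+j$, the rearrangement being permissible because in the complete ultrametric space $R$ a double series converges precisely when its terms tend to zero; the inner sum is then $(q(x+y))^n/n!$ by the binomial theorem, giving $\exp(q(x+y))=\exp(qx+qy)$.

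The only genuinely delicate point is the convergence of the exponential in the borderline dyadic case: everything hinges on the strict inequality $v_p(q)>1/(p-1)$, which is exactly what forces the choice $q=4$ when $p=2$, since $q=2$ would make the terms fail to tend to zero. The remaining steps are routine manipulations of ultrametric series.
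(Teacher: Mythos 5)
Your proof is correct and takes essentially the same route as the paper's: both reduce part (3) to formal power-series identities that become valid upon convergence, and establish convergence of $\log$ and $\exp$ by elementary term-by-term estimates on $v_p(q^n/n)$ and $v_p(q^n/n!)$ (which the paper delegates to \cite[Lemma 4.2.8]{Cohen} while you carry them out via Legendre's formula). The only cosmetic difference is that you define the gauge on $R$ coordinatewise, whereas the paper uses $v_p(x)=\max\{k : x\in p^kR\}$; these coincide for a free $\mathbb{Z}_p$-module.
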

\begin{proof}
The equalities of part (3) hold as identities of formal power series, so the conclusion holds as soon as all the relevant series converge uniformly. Thus it suffices to show (1) and (2). Given $x \in R$, denote by $v_p(x)$ the largest integer $k$ such that $x \in p^k R$ (with $k=\infty$ if $x=0$). Since the $p$-adic metric is non-archimedean, to show uniform convergence it suffices to prove that the general term of the series considered goes to 0 uniformly as $n \to \infty$. As a fundamental system of neighbourhoods of $0 \in R$ is given by $\{p^k R : k \in \mathbb{N}\}$, it suffices to show that $v_p\left( \frac{(qx)^n}{n} \right)$ and $v_p\left( \frac{(qx)^n}{n!} \right)$ tend to infinity when $n \to \infty$. Given the definition of $v_p$, it is enough to prove the same statement for $v_p\left( \frac{q^n}{n} \right)$ and $v_p\left( \frac{q^n}{n!} \right)$, and this is well-known (see for example \cite[Lemma 4.2.8]{Cohen}).
\end{proof}

\begin{proof}[Proof of Theorem \ref{thm:DimY}]
The quotient $\overline{R} = R/pR$ is a finite $\mathbb{F}_p$-algebra. In particular, the group $(R/pR)^\times$ has finite exponent, so for each $i=1,\ldots,k$ there exists $E_i$ such that $\varepsilon_i^{E_i}$ reduces to the identity of $R/pR$. We may then write $\varepsilon_i^{E_i} = 1 + p s_i$ for some $s_i \in R$. Replacing $E_i$ by $2E_i$ when $p=2$ we have $\varepsilon_i^{E_i} = 1 + q s'_i$, where $s'_i \in R$ and $q$ is as in Lemma \ref{lemma:pAdicFunctions}.
For each $\underline{i}=(i_1,\ldots,i_k) \in \prod_{j=1}^k \{0,\ldots,E_j-1\}$ we consider the function
\[
f_{\underline{i}}(t_1,\ldots,t_k) := \prod_{j=1}^k \varepsilon_j^{i_j} \cdot \exp \left( t_1 \log(\varepsilon_1^{E_1}) + \cdots + t_k \log(\varepsilon_k^{E_k}) \right).
\]
By Lemma \ref{lemma:pAdicFunctions}, this is a well-defined $p$-adic analytic function, converging on all of $\mathbb{Z}_p^k$, with values in $1+qR$ (simply notice that by construction we have $\varepsilon_i^{E_i}=1+qs'_i$, so $\log(\varepsilon_i^{E_i})$ is in $qR$). Moreover, as $R$ is $p$-adically complete, all elements congruent to 1 modulo $p$ are invertible, so $f_{\underline{i}}$ takes values in $R^\times$.
Let $L_i := \log(\varepsilon_i^{E_i}) \in R$. From the set $\{L_1,\ldots,L_k\}$ we may extract a basis of the (automatically free) $\mathbb{Z}_p$-submodule of $R \cong \mathbb{Z}_p^r$ generated by $L_1,\ldots,L_k$. Up to renumbering, we may assume that this basis consists of $L_1,\ldots,L_m$ for some $m \leq \min\{k, r\}$. It is then clear that the image of $f_{\underline{i}}$ is the same as the image of
\[
g_{\underline{i}}(t_1,\ldots,t_m) = \prod_{j=1}^k \varepsilon_j^{i_j} \cdot \exp \left( t_1 L_1 + \cdots + t_m L_m \right) : \mathbb{Z}_p^m \to R.
\]
The $i$-th column of the Jacobian matrix of $g_{\underline{i}}$ at the point $(t_1,\ldots,t_m)$ is $g_{\underline{i}}(t_1,\ldots,t_m) \cdot L_i$, where we interpret elements of $R$ as vectors in $\mathbb{Z}_p^r$. Since $g_{\underline{i}}(t_1,\ldots,t_m)$ is a unit and the $L_i$ are linearly independent, the Jacobian has the maximal rank $m$, so $g_{\underline{i}}$ is locally an immersion of $p$-adic manifolds. Furthermore, $g_{\underline{i}}$ is globally injective, because $\prod_{j=1}^k \varepsilon_j^{i_j}$ is a unit, $\exp$ is invertible on $1+qR$, and $L_1,\ldots,L_m$ are linearly independent. Thus $f_{\underline{i}}(\mathbb{Z}_p^k) = g_{\underline{i}}(\mathbb{Z}_p^m)$ is a $p$-adic manifold of dimension $m \leq k$.
Observe now that for integer values of $t_1,\ldots,t_k$ we have 
\[
\begin{aligned}
f_{\underline{i}}(t_1,\ldots,t_k) & = \prod_{j=1}^k \varepsilon_j^{i_j} \cdot \exp \left( t_1 L_1 + \cdots + t_k L_k \right) \\
& = \prod_{j=1}^k \varepsilon_j^{i_j} \cdot \prod_{j=1}^k \exp(\log(\varepsilon_j^{E_j}))^{t_j} \\
& = \prod_{j=1}^k \varepsilon_j^{i_j+ E_jt_j} \in Y_0.
\end{aligned}
\]
Conversely, we claim that $Y_0 \subseteq \bigcup_{\underline{i}} f_{\underline{i}}(\mathbb{Z}^k)$: indeed, given any element $\varepsilon_1^{e_1} \cdots \varepsilon_k^{e_k}$ of $Y_0$, let $\underline{i}=(i_1,\ldots,i_k) \in \prod_{j=1}^k \{0,\ldots,E_j-1\}$ be defined by the conditions $i_j \equiv e_j \pmod{E_j}$. We can then write
\[
(\varepsilon_1, \ldots, \varepsilon_k ) =(i_1,\ldots,i_k) + (E_1t_1,\ldots,E_kt_k)
\]
for some $(t_1,\ldots,t_k) \in \mathbb{Z}^k \subseteq \mathbb{Z}_p^k$, and from the previous formulas we get
\[
\varepsilon_1^{e_1} \cdots \varepsilon_k^{e_k} = f_{\underline{i}}(t_1,\ldots,t_k) \in f_{\underline{i}}(\mathbb{Z}^k).
\]
Finally, since $\mathbb{Z}^k$ is $p$-adically dense in $\mathbb{Z}_p^k$ and $f_{\underline{i}}$ is analytic, we also obtain that $f_{\underline{i}}(\mathbb{Z}^k)$ is dense in $f_{\underline{i}}(\mathbb{Z}_p^k)$. Since $Y_0 = \bigcup_{\underline{i}} f_{\underline{i}}(\mathbb{Z}^k)$, this proves that $\bigcup_{\underline{i}} f_{\underline{i}}(\mathbb{Z}_p^k)$ is precisely the $p$-adic closure of $Y_0$ and establishes the theorem.
\end{proof}

\begin{remark}
In particular, when $k=1$ and $\varepsilon$ is a unit of infinite order, it follows from the proof of the theorem that $Y$ is the union of finitely many $1$-dimensional smooth $p$-adic manifolds.
\end{remark}

\section{Proof of Theorem \ref{thm:Main}}

Let $F(x,y) = x^5+4b^4xy^4-y^5$, where $b$ is a nonzero multiple of $5$. The polynomial $F(x,1)$ is irreducible \cite[Satz 1]{MR780251}, so the number field $K:=\mathbb{Q}[x]/(F(x,1))$ has degree 5 over $\mathbb{Q}$. Letting $\vartheta$ be a root of $F(x,1)$ in $K$,
the equation we are trying to solve can be rewritten as $N_{K/\mathbb{Q}}(m-n\vartheta)=1$. Notice that $m-n\vartheta$ is in $\mathbb{Z}[\vartheta]$, and by the condition on the norm it is also a unit of this ring.

For the global part of Skolem's approach we rely on some results from \cite{MR780251}. The function of real variable $x \mapsto F(x,1)$ is strictly increasing, so $F(x,1)$ has precisely one real root, and $K$ has one real and four complex embeddings. It then follows from Dirichlet's unit theorem that the group of units of $\mathcal{O}_K$ (hence also of $\mathbb{Z}[\vartheta]$) has rank 2. The condition that $m-n\vartheta$ be a unit of norm 1 may then be written as
\begin{equation}\label{eq:Units}
m-n\vartheta =  \xi_1^{n_1} \xi_2^{n_2},
\end{equation}
where $\xi_1, \xi_2$ is a fundamental system of positive units for $\mathbb{Z}[\vartheta]^\times$ (as $K$ has a real embedding, the torsion subgroup of $\mathbb{Z}[\vartheta]^\times$ is $\{ \pm 1 \}$). By \cite[Satz 3]{MR780251}, a system of  fundamental positive units of $\mathbb{Z}[\vartheta]^\times$ is given by $\xi_1 = \vartheta$ and $\xi_2 = \vartheta^2 + 2b\vartheta + 2b^2$. The three known solutions of Equation \eqref{eq:Thue} listed in Theorem \ref{thm:Main} correspond to $m-n\vartheta = 1, m-n\vartheta = \vartheta$, and $m-n\vartheta = 1-4b^4\vartheta = \xi_1^5$, that is, $(n_1,n_2)=(0,0), (1,0), (5,0)$.

We are now ready to apply the general strategy of the introduction: we will work in the $p$-adic analytic variety $A:=\mathbb{Z}[\vartheta] \otimes_{\mathbb{Z}} \mathbb{Z}_p$, which we also consider as a ring, and which will play the role of the $\mathbb{Z}_p$-algebra $R$ from Theorem \ref{thm:DimY}. We take the subvariety $X$ to be
\[
X = \{a_0+a_1\vartheta+a_2 \vartheta^2+a_3 \vartheta^3 +a_4 \vartheta^4 : a_0, a_1, a_2, a_3, a_4 \in \mathbb{Z}_p, a_2=a_3=a_4=0 \},
\]
and we take as $Y$ the $p$-adic closure of $\{\xi_1^{n_1} \xi_2^{n_2} : n_1, n_2 \in \mathbb{Z}\}$. By the discussion above, it is clear that our desired solutions lie in the intersection $X \cap Y$.
As auxiliary prime we choose $p=5$, which is assumed to divide $b$.

\begin{remark}\label{rmk:5IsIrrelevant}
As will be clear from the proof, one could work with any prime factor of $b$, but a complete solution of the Thue equation (or even just getting an explicit bound for the number of solutions) would then require a much longer case-by-case analysis: there are in principle 25 cases to treat, that we will shortly reduce to 2 under the assumption $5 \mid b$. For a general prime $p$ it would be easy to reduce the number of cases to 10, but it is not clear how to further cut down this number. Since our main interest lies in presenting Skolem's method, we have decided to make the simplifying assumption $5 \mid b$ to keep the proof to a reasonable length.
\end{remark}

We clearly have $A \cong \mathbb{Z}_p^5$, with natural coordinates given by the coefficients of $1, \vartheta, \cdots, \vartheta^4$, the dimension of $X$ is 2, and the dimension of $Y$ is also at most 2 by Theorem \ref{thm:DimY}. We may then well expect $X \cap Y$ to be finite: we now show that this is the case and bound its size. The statement of Theorem \ref{thm:Main} lists three pairs $(m,n)$ that are clearly solutions of Equation \eqref{eq:Thue}, so it suffices to prove that $|X \cap Y|=3$, or in fact even $|X \cap Y| \leq 3$.

Let $k \geq 1$ be the $5$-adic valuation of $b$. We have $\xi_1^5 = \vartheta^5 = 1-4b^4\vartheta \equiv 1 \pmod{5}$ and $\xi_2^5 \equiv (\vartheta^2)^5 \equiv 1 \pmod 5$, so by Lemma \ref{lemma:pAdicFunctions} we may define $L_i = \log (\xi_i^5)$ for $i=1, 2$. Following the general description of Theorem \ref{thm:DimY} we would now have to study the expression
$
\xi_1^{n_1} \xi_2^{n_2}
$
by distinguishing the 25 possible cases for the pair $(n_1 \bmod 5, n_2 \bmod 5)$.
Under the assumption $5 \mid b$, we now reduce this number to 2 %
(notice that for any prime divisor $p$ of $b$ we have $\xi_1^5 \equiv \xi_2^5 \equiv 1 \pmod{p}$, so the 25 pairs of exponents we need to consider are independent of the choice of the auxiliary prime $p$).
Notice first that $\vartheta^5$ is congruent to $1$ modulo $5^{4k}$ (and not just modulo 5). As any power $(2b\vartheta+2b^2)^j$ with $j \geq 3$ is divisible by $5^{3k}$ in $A$ we have
\[
\begin{aligned}
\xi_1^{n_1} \xi_2^{n_2} & = \vartheta^{n_1} (\vartheta^2 + (2b\vartheta+2b^2))^{n_2}  \\
& \equiv \vartheta^{n_1} \left( \vartheta^{2n_2} + {n_2 \choose 1} \vartheta^{2n_2-2} (2b\vartheta+2b^2) + {n_2 \choose 2} \vartheta^{2n_2-4} (2b\vartheta+2b^2)^2 \right) \pmod{5^{3k}} \\
& \equiv \vartheta^{n_1} \left( \vartheta^{2n_2} + {n_2 \choose 1} \vartheta^{2n_2-2} (2b\vartheta+2b^2) + {n_2 \choose 2} \vartheta^{2n_2-4} 4b^2 \vartheta^2 \right) \pmod{5^{3k}} \\
& \equiv \vartheta^{n_1} \left( \vartheta^{2n_2} + 2n_2b\vartheta^{2n_2-1} + 2n_2^2b^2\vartheta^{2n_2-2}  \right) \pmod{5^{3k}}.
\end{aligned}
\]
We consider this expression in $A \otimes \mathbb{Z}_5/5^{3k}\mathbb{Z}_5$, and we are interested in cases when it is of the form $m - n \vartheta$.
Since $\vartheta^j \equiv \vartheta^{j \bmod 5} \pmod{5^{3k}}$, and since the exponents $n_1+2n_2, n_1+2n_2-1$ and $n_1+2n_2-2$ are all distinct modulo 5, we obtain that at least one of the coefficients of $\vartheta^{n_1+2n_2}, \vartheta^{n_1+2n_2-1}, \vartheta^{n_1+2n_2-2}$ has to vanish modulo $5^{3k}$. Thus at least one of $2bn_2$ and $2n_2^2b^2$ is divisible by $5^{3k}$, which immediately implies that $5$ divides $n_2$. When this is the case, we have $\xi_1^{n_1}\xi_2^{n_2} \equiv \vartheta^{n_1+2n_2} \equiv \vartheta^{(n_1+2n_2) \bmod 5}\equiv  \vartheta^{n_1\bmod 5} \pmod{5}$; since we are again only interested in the cases when this element is of the form $m-n\vartheta$, we obtain $n_1 \equiv 0,1 \pmod 5$. So $X \cap Y = X \cap (f_0(\mathbb{Z}_5^2) \cup f_1(\mathbb{Z}_5^2))$, where the analytic functions $f_0, f_1$ are given by
\[
f_0(t_1,t_2) = \exp(t_1 L_1 + t_2L_2), \quad f_1(t_1,t_2) = \xi_1 \exp(t_1 L_1+ t_2L_2).
\]
Thus we only need to solve the equations $f_i(t_1,t_2) \in X$ for $t_1, t_2 \in \mathbb{Z}_5$. To this end we first expand $L_1, L_2$ to sufficient $5$-adic precision: we easily obtain
\[
L_1 = \log(\xi_1^5) = \log(1-4b^4\vartheta) = -4b^4\vartheta -8b^8\vartheta^2 + O(b^{12}),
\]
where the error term $O(b^{12})$ denotes an element in $(b^{12})A= (5^{12k})A$. A short computation also gives
\[
L_2 = 32b^5+
    2 b^4 \vartheta +\left( - \frac{20}{3}b^3 + 4b^8 \right)\vartheta^2 - \frac{320}{21}b^7 \vartheta^3 + 10b \vartheta^4   + O(b^9).
\]
This is enough information to expand $f_1(t_1,t_2)$ to $p$-adic precision $O(5b^4)=O(5^{4k+1})$: writing $f_1(t_1,t_2) = \sum_{j=0}^4 f_{1,j}(t_1,t_2) \vartheta^j$ we find
\[
f_{1,2}(t_1,t_2) = - 4b^4 t_1 + 2b^4 t_2 + O(5b^4)
\]
and
\[
f_{1,3}(t_1,t_2) = \frac{500}{3}b^3 t_2^3 - \frac{20}{3}b^3 t_2  + O(5b^{4}),
\]
where the error term now stands for a power series all of whose coefficients lie in $(5b^4)A$.
The condition that $f_1(t_1,t_2) \in X$ implies in particular $f_{1,2}(t_1,t_2)=f_{1,3}(t_1,t_2)=0$, or equivalently
\[
b^{-4}f_{1,2}(t_1,t_2) = 5^{-1}b^{-3} f_{1,3}(t_1,t_2) = 0.
\]
The previous formulas give the reductions modulo $5$ of these two power series:
\[
b^{-4}f_{1,2}(t_1,t_2) = -4t_1 + 2t_2 + O(5), \quad 5^{-1}b^{-3} f_{1,3}(t_1,t_2) = -\frac{4}{3} t_2 + O(5).
\]
Since the determinant of the Jacobian matrix
$
\begin{pmatrix}
-4 & 2 \\
0 & -4/3
\end{pmatrix}
$
is nonzero modulo 5, Theorem \ref{thm:Skolem} implies that the system of equations $f_{1,2}(t_1,t_2)=f_{1,3}(t_1,t_2)=0$ has at most one solution in $\mathbb{Z}_5^2$. Since $t_1=t_2=0$ is certainly a solution, we find that $X \cap f_1(\mathbb{Z}_5^2) = \{ f_1(0,0) \} = \{ \vartheta\}$. This corresponds to the first trivial solution $m=0, n=-1$ of our original Thue equation \eqref{eq:Thue}. 

The case of $f_0(t_1,t_2)$ is significantly more complicated, the problem being that the coefficients of the monomials involving $t_1$ are all divisible by high powers of $b$. For this reason, we find it convenient to perform an obviously invertible change of variables and instead work with $f_0(t_1,t_1+t_2)$. Write as above
$
f_0(t_1, t_1+t_2) = \sum_{j=0}^{4} f_{0,j}(t_1,t_2) \vartheta^j.
$
An easy 
computation then gives
\[
f_{0,4}(t_1,t_2) = 10bt_1 + 10bt_2 + O(5^{6k+1}),
\]
where the only nontrivial term comes from the coefficient of $\vartheta^4$ in the linear term of $\exp((t_1+t_2)L_2)$.
We now apply the Weierstrass preparation theorem for $p$-adic series in two variables \cite{MR1918586, MR607075}. 
In the language of \cite{MR1918586}, the power series $(10b)^{-1}f_{0,4}(t_1,t_2) = t_1+t_2 + O(b^5)$ is \textit{general of order $1$ in $t_1$}, so \cite[Theorem 2]{MR1918586} implies that there exists a $p$-adic power series $h(t_1)=-t_1 + O(b^5)$ such that $f_{0,4}(t_1,t_2)=0 \Longleftrightarrow t_2=h(t_1)$. %
We write the power series $h(t_1)+t_1$ as $b^5E(t_1)$, where $E(t_1) \in A[[t_1]]$. We are then reduced to studying the 1-variable problem
$
f_0(t_1, t_1+h(t_1)) \in X
$. We have $f_0(t_1, t_1+h(t_1)) = \exp(t_1 L_1)\exp(b^5E(t_1) L_2)$, and since $b^5E(t_1) L_2$ vanishes at least to order $b^6$ it is straightforward to obtain the expansion of $f_0(t_1, t_1+h(t_1))$ to order $O(5^{8k+1})$: we have $\exp(b^5E(t_1) L_2)= 1+10 E(t_1)b^6\vartheta^4 + O(5^{8k+1})$, so
writing again
\[
\exp(t_1 L_1 + b^5E(t_1) L_2) = \sum_{j=0}^4 f_{0,j}(t_1) \vartheta^j
\]
we immediately obtain $f_{0,2}(t_1) = - 8b^8t_1 + 8b^8t_1^2 + O(5^{8k+1})$. We can now apply Strassmann's theorem to $f_{0,2}(t_1) = \sum_{n \geq 0} a_nt_1^n$: we have $v_p(a_1)=v_p(a_2)=8k$ and $v_p(a_n) \geq 8k+1$ for $n \not \in \{1,2\}$. Thus in the notation of Theorem \ref{thm:Strassmann} we have $N=2$, and the equation $f_{0,2}(t_1)=0$ has at most 2 solutions. Since $t_2=h(t_1)$ is determined by $t_1,$ we have shown that $|X \cap f_0(\mathbb{Z}_5^2)| \leq 2$ and therefore $|X \cap Y| \leq 3$. This implies that the three known solutions must be \textit{all} the solutions of Equation \eqref{eq:Thue}, which concludes the proof of Theorem \ref{thm:Main}.

\bibliographystyle{abbrv}
\bibliography{Biblio}

\end{document}